\documentclass[10pt,twoside]{article}

\newcommand{\AuthorName}{Ruisong Yuan}

\newcommand{\AuthorAffiliationNumbers}{1,2}
\newcommand{\AllAuthorNames}{Wenyi Wang, Ruisong Yuan, Yuehui Zhang, Yuhang Zhu}
\newcommand{\AuthorLine}{%
  Wenyi Wang\textsuperscript{1}\,\ensuremath{\cdot}\,
  \AuthorName\textsuperscript{\AuthorAffiliationNumbers,}\CorrespondingAuthorNote\,\ensuremath{\cdot}\,
  Yuehui Zhang\textsuperscript{1}\,\ensuremath{\cdot}\,
  Yuhang Zhu\textsuperscript{1}}
\newcommand{\AuthorEmail}{doubendouyrs@sjtu.edu.cn}
\newcommand{\AuthorEmailWang}{wangwenyi2004@sjtu.edu.cn}
\newcommand{\AuthorEmailZhang}{zyh@sjtu.edu.cn}
\newcommand{\AuthorEmailZhu}{12h.03@sjtu.edu.cn}

\newcommand{\AuthorDepartmentOne}{School of Mathematical Sciences}
\newcommand{\AuthorInstitutionOne}{Shanghai Jiao Tong University}
\newcommand{\AuthorStreetAddressOne}{800 Dongchuan Road}
\newcommand{\AuthorCityOne}{Shanghai}
\newcommand{\AuthorPostalCodeOne}{200240}
\newcommand{\AuthorCountryOne}{China}
\newcommand{\AuthorAffiliationOne}{%
  \AuthorDepartmentOne, \AuthorInstitutionOne, \AuthorStreetAddressOne,
  \AuthorCityOne\ \AuthorPostalCodeOne, \AuthorCountryOne}

\newcommand{\AuthorInstitutionTwo}{Shanghai Innovation Institute}
\newcommand{\AuthorStreetAddressTwo}{No. 3, Lane 699, Huafa Road}
\newcommand{\AuthorCityTwo}{Shanghai}
\newcommand{\AuthorPostalCodeTwo}{200231}
\newcommand{\AuthorCountryTwo}{China}
\newcommand{\AuthorAffiliationTwo}{%
  \AuthorInstitutionTwo, \AuthorStreetAddressTwo,
  \AuthorCityTwo\ \AuthorPostalCodeTwo, \AuthorCountryTwo}

\usepackage[T1]{fontenc}
\usepackage{mathptmx,amsmath,amssymb,amsthm}
\usepackage[scaled=0.94]{helvet}
\usepackage[paperwidth=155mm,paperheight=235mm,left=50.742bp,
  textwidth=119mm,top=55.927bp,textheight=194mm]{geometry}
\usepackage{graphicx,eso-pic,titlesec,enumitem}
\usepackage[colorlinks=true,linkcolor=blue,citecolor=blue,urlcolor=blue,
  pdftitle={Directed partial orders on the complex number field},
  pdfauthor={\AllAuthorNames}]{hyperref}
\titleformat{\section}{\sffamily\bfseries\fontsize{11}{13}\selectfont}{\thesection}{.45em}{}
\titlespacing*{\section}{0pt}{22pt plus 2pt minus 2pt}{12pt plus 1pt minus 1pt}
\titleformat{\subsection}{\sffamily\bfseries\fontsize{10}{12}\selectfont}{\thesubsection}{.45em}{}
\titlespacing*{\subsection}{0pt}{16pt plus 2pt minus 2pt}{9pt plus 1pt minus 1pt}
\newtheoremstyle{journalplain}{9pt plus 1pt minus 1pt}{9pt plus 1pt minus 1pt}
  {\itshape}{0pt}{\sffamily\bfseries}{}{.65em}{}
\newtheoremstyle{journaldefinition}{9pt plus 1pt minus 1pt}{9pt plus 1pt minus 1pt}
  {\normalfont}{0pt}{\sffamily\bfseries}{}{.65em}{}
\theoremstyle{journalplain}
\newtheorem{theorem}{Theorem}[section]
\newtheorem{lemma}[theorem]{Lemma}
\newtheorem{proposition}[theorem]{Proposition}
\newtheorem{corollary}[theorem]{Corollary}
\newtheorem*{conjecture}{Conjecture}
\theoremstyle{journaldefinition}
\newtheorem*{remark}{Remark}
\newtheorem*{example}{Example}
\renewenvironment{proof}[1][Proof]{\par\pushQED{\qed}\normalfont
  \topsep6pt plus 1pt\trivlist
  \item[\hskip\labelsep{\sffamily\bfseries\itshape #1}]\ignorespaces}
  {\popQED\endtrivlist}

\numberwithin{equation}{section}
\newcommand{\C}{\mathbb C}
\newcommand{\R}{\mathbb R}
\newcommand{\Q}{\mathbb Q}
\newcommand{\Qbar}{\overline{\mathbb Q}}
\newcommand{\Z}{\mathbb Z}

\newcommand{\A}{\mathbb A}
\newcommand{\OO}{\mathcal O}

\newcommand{\ordt}{\operatorname{ord}_{t}}
\newcommand{\lc}{\operatorname{lc}}
\newcommand{\nonzero}[1]{#1\setminus\{0\}}
\newcommand{\place}[3]{\AtPageUpperLeft{\setlength{\unitlength}{1bp}\put(#1,-#2){#3}}}
\AddToShipoutPictureFG{%
  \ifnum\value{page}>1
  \place{50.742}{39.655}{\makebox[119mm][l]{\normalfont\sffamily\fontsize{8}{10}\selectfont
    \ifodd\value{page} Directed partial orders on the complex field\hfill\thepage
    \else\thepage\fi}}
  \place{50.742}{46.381}{\rule{119mm}{.7bp}}
  \fi}
\makeatletter
\renewcommand\@biblabel[1]{#1.}
\renewenvironment{thebibliography}[1]
 {\section*{References}\fontsize{8}{9.5}\selectfont
  \list{\@biblabel{\@arabic\c@enumiv}}{\setlength{\labelwidth}{10pt}
    \setlength{\leftmargin}{15.37pt}\setlength{\labelsep}{5.37pt}
    \setlength{\itemsep}{2pt}\setlength{\parsep}{0pt}
    \usecounter{enumiv}\let\p@enumiv\@empty\renewcommand\theenumiv{\@arabic\c@enumiv}}
  \sloppy\clubpenalty4000\widowpenalty4000\sfcode`\.=1000\relax}
 {\endlist}
\makeatother
\newcommand{\IntCl}{\operatorname{IntCl}_{\C}}

\newcommand{\CorrespondingAuthorMark}{%
  \begingroup\setlength{\unitlength}{1pt}\linethickness{.35pt}%
  \begin{picture}(8,5.5)
    \put(0,0){\framebox(8,5.5){}}
    \qbezier(0,5.5)(2,4.5)(4,3.5)
    \qbezier(8,5.5)(6,4.5)(4,3.5)
    \qbezier(0,0)(1.5,2)(3,4)
    \qbezier(8,0)(6.5,2)(5,4)
  \end{picture}\endgroup}
\makeatletter
\newcommand{\CorrespondingAuthorNote}{%
  \begingroup
  \renewcommand{\thefootnote}{*}%
  \long\def\@makefntext##1{\noindent\makebox[12pt][l]{\textsuperscript{*}}##1}%
  \begin{NoHyper}%
  \footnote[1]{Corresponding author.}%
  \end{NoHyper}%
  \endgroup}
\newcommand{\PrintAuthorInformation}{%
  \begingroup
  \renewcommand{\thefootnote}{}%
  \long\def\@makefntext##1{\noindent##1}%
  \begin{NoHyper}%
  \footnotetext{%
    \normalfont\fontsize{8}{9.5}\selectfont
    \setlength{\parindent}{0pt}%
    \begin{tabular}{@{}p{12pt}@{}p{\dimexpr\textwidth-12pt\relax}@{}}
      \CorrespondingAuthorMark & \AuthorName\\
      & \begingroup\urlstyle{same}\expandafter\nolinkurl\expandafter{\AuthorEmail}\endgroup\\[6pt]
      & Wenyi Wang\\
      & \begingroup\urlstyle{same}\expandafter\nolinkurl\expandafter{\AuthorEmailWang}\endgroup\\[6pt]
      & Yuehui Zhang\\
      & \begingroup\urlstyle{same}\expandafter\nolinkurl\expandafter{\AuthorEmailZhang}\endgroup\\[6pt]
      & Yuhang Zhu\\
      & \begingroup\urlstyle{same}\expandafter\nolinkurl\expandafter{\AuthorEmailZhu}\endgroup\\[6pt]
      \textsuperscript{1} & \AuthorAffiliationOne\\[6pt]
      \textsuperscript{2} & \AuthorAffiliationTwo
    \end{tabular}%
  }%
  \end{NoHyper}%
  \endgroup}
\makeatother

\begin{document}
\vspace*{27bp}
\noindent{\raggedright\sffamily\bfseries\fontsize{13}{15}\selectfont
Directed partial orders on the complex number field\par}
\vspace{17pt}
\noindent{\sffamily\bfseries \AuthorLine\par}
\PrintAuthorInformation
\vspace{28pt}
\noindent{\sffamily\bfseries Abstract}\par\noindent
We construct a class of positive cones that make $\C$ into a directed partially ordered ring. The positive cones are defined using integral closures of local rings, associated with a transcendence basis and a chosen real generator. A localization criterion also yields such orders on every transcendental extension of $\Q$. For a fixed coefficient field, we prove that two real generators define the same cone if and only if they differ by an affine map with positive real-algebraic slope and translation algebraic over that field. The nonzero positive elements are closed under inversion. None of these orders is a lattice order.
\par\vspace{12pt}
\noindent{\sffamily\bfseries Keywords} Directed partial order $\cdot$ Positive cone $\cdot$ Integral closure $\cdot$ Generator equivalence
\par\vspace{12pt}
\noindent{\sffamily\bfseries Mathematics Subject Classification (2020)}\par\noindent
Primary 06F25; Secondary 12J15, 13B22

\section{Introduction}\label{sec:intro}
The Birkhoff--Pierce problem asks whether the complex field $\C$ admits a lattice order compatible with its ring operations \cite{BP}. Schwartz and Yang \cite[Corollary 4.2]{SY} proved the existence of directed partial orders on $\C$. Their argument for $\C$ uses Zorn's lemma and does not construct a concrete positive cone. The lattice-order problem remains open.

A \emph{positive cone} in a field $E$ is a subset $P$ such that
\begin{equation}\label{eq:axioms}
 P+P\subseteq P,\qquad PP\subseteq P,\qquad P\cap(-P)=\{0\}.
\end{equation}
It defines a ring order by $x\preceq y$ if $y-x\in P$. We write $x\prec y$ if $y-x\in\nonzero P$. The order is \emph{directed} if every pair has a common upper bound and a common lower bound, or equivalently, if
\[
 P-P=E.
\]
It is a \emph{lattice order} if every pair has a least upper bound and a greatest lower bound. Every lattice order is directed.

Schwartz and Yang \cite[Theorem 4.1 and Corollary 4.2]{SY} study the extension of non-Archimedean total orders on purely transcendental fields to directed partial orders on algebraic extensions. They use polynomial cones and convex ideals in the finite case, then a maximality argument based on Zorn's lemma for arbitrary algebraic extensions. They later proved that a field admits a directed partial order if and only if it has characteristic zero and is either formally real or of positive transcendence degree over $\Q$ \cite{SY23}.

Ma \cite{MaFields25} described maximal partial orders on fields algebraic over $\Q$ in terms of embeddings into $\C$. For integral domains algebraic over $\Z$, he proved that a maximal partial order is an Archimedean total order whenever it is directed \cite{MaComplex25}. In particular, if $F\subseteq\R$ is algebraic over $\Q$, then $F(i)$ admits no directed ring order. This applies to the field $\Qbar$ of all algebraic numbers. For fields of positive transcendence degree, Ma \cite[Theorem 2]{Ma25} revisited the existence theorem. His proof extends a polynomial cone to a maximal partial order and explicitly credits Dubois's work on infinite primes.

For a totally ordered subfield $F$ of $E$, an $F$-algebra order also requires $F_{\ge0}P\subseteq P$. When $F$ is non-Archimedean, Ma, Wu and Zhang \cite{MWZ17} constructed concrete cones on $F(i)$ and on the quaternion algebra over $F$. These cones arise from additive semigroups in $F_{\ge0}$; none defines a lattice order. They later classified directed $F$-algebra orders on $F(i)$, using admissible semigroups when $1>0$ \cite{MWZ18} and special convex sets when $1\not>0$ \cite{MWZ19}. Xu and Zhang \cite{XZ} unified both cases through doubly convex sets of infinitesimals and extended the classification to quaternion algebras. These classifications keep the ordered field $F$ fixed.

We construct a class of directed partial orders on $\C$ by giving their positive cones in terms of integral closures of local rings. Once a transcendence basis is fixed and a real generator is chosen, the cone is determined. We also prove a localization criterion that applies to every field of characteristic zero with positive transcendence degree over $\Q$. For a fixed coefficient field, we give a necessary and sufficient condition for two real generators to define the same cone.

All subrings in the construction contain the identity of their ambient field. Write $\A=\Qbar\cap\R$ for the field of real algebraic numbers, with its usual order. Our cones are compatible with this order. Fix a transcendence basis $B$ of $\R/\Q$ containing $\pi$, and put
\begin{equation}\label{eq:data}
 t=\pi^{-1},\qquad k=\Q(B\setminus\{\pi\}).
\end{equation}
The field $k$ consists of quotients of polynomials over $\Q$ in finitely many elements of $B\setminus\{\pi\}$, with nonzero denominators.
Let $\OO$ be the integral closure in $\C$ of the local ring
\begin{equation}\label{eq:ring}
 D=k[t]_{(t)}
 =\left\{\frac{f(t)}{g(t)}:f,g\in k[T],\ g(0)\ne0\right\}.
\end{equation}
Here $T$ is an indeterminate, and $k[t]_{(t)}$ denotes the localization of $k[t]$ at the maximal ideal $(t)$.

\begin{samepage}
\begin{theorem}[Construction]\label{thm:main}
With the notation above, the set
\begin{equation}\label{eq:cone}
 P=\{0\}\cup\bigcup_{n\in\Z}t^{-n}(\A_{>0}+t\OO)
\end{equation}
is a positive cone defining a directed ring order on $\C$. It contains $1$ and is closed under inversion of nonzero elements. For every $z\in\C$ there is an integer $N\ge1$ such that
\begin{equation}\label{eq:certificate}
 \pi^N+z,\ \pi^N-z\in\nonzero P.
\end{equation}
Moreover, $P\cap\Qbar=\A_{\ge0}$.
\end{theorem}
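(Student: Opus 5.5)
The plan rests on elementary properties of $\OO$ as the integral closure of the discrete valuation ring $D$. I first isolate three facts.

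\emph{(a)} $\Qbar\subseteq\OO$. Every algebraic number satisfies a monic polynomial over $\Z$ after scaling, and $\Z\subseteq D$. Since $\Qbar$ is a field, every nonzero algebraic number is a unit of $\OO$.

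\emph{(b)} $t^{-1}\notin\OO$. Indeed $D$ is a DVR, hence integrally closed in $k(t)$, so $\OO\cap k(t)=D$, and $t^{-1}\notin D$.

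\emph{(c)} $1+t\OO\subseteq\OO^\times$, and $(1+t\OO)^{-1}=1+t\OO$. Every maximal ideal of $\OO$ lies over the maximal ideal $tD$ (lying over/going up for integral extensions), so it contains $t$. Hence $1+tu$ lies in no maximal ideal and is a unit. Then $(1+tu)^{-1}=1-tu(1+tu)^{-1}\in1+t\OO$.

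From (a) and (b) I get the key \emph{uniqueness lemma}: $\A\cap t\OO=\{0\}$. If $0\ne a\in\A\cap t\OO$, then $a^{-1}\in\OO$ forces $1\in t\OO$, i.e.\ $t^{-1}\in\OO$. It follows that a representation $x=t^{-n}(a+tw)$ with $a\in\nonzero{\A}$ and $w\in\OO$ has uniquely determined $n$ and $a$. Suppose $t^{-n}(a+tw)=t^{-m}(b+tw')$ with $n>m$. Then $a\in t\OO$ (because $t^{n-m}\in t\OO$), which is impossible. If $n=m$, then $a-b\in t\OO\cap\A$, so $a=b$. In particular $0\notin\A_{>0}+t\OO$.

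With this lemma the cone axioms are routine. For addition, write $x=t^{-n}(a+tw)$ and $y=t^{-m}(b+tw')$ with $n\ge m$. If $n>m$, then $x+y=t^{-n}\bigl(a+t(w+t^{n-m-1}(b+tw'))\bigr)$. If $n=m$, the leading coefficient is $a+b>0$. For multiplication, $xy=t^{-(n+m)}(ab+t(\cdots))$ with $ab>0$. For antisymmetry, suppose $x\ne0$ and both $x,-x\in P$. By uniqueness, $-x=t^{-n}(-a-tw)$ would have leading coefficient $-a<0$, a contradiction. The cone contains $1$ (take $n=0$, $a=1$, $w=0$). It is closed under inversion by (c): $(t^{-n}a(1+ta^{-1}w))^{-1}=t^{n}a^{-1}(1+t\OO)$, and $a^{-1}\in\A_{>0}$.

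For \eqref{eq:certificate}, note that $z$ is algebraic over $k(t)=\Q(B)$ because $B$ is a transcendence basis of $\C/\Q$. Clearing denominators in a minimal polynomial gives some $m\ge0$ with $t^mz$ integral over $D$, i.e.\ $t^mz\in\OO$. Take $N=m+1$. Then $\pi^N\pm z=t^{-N}(1\pm t^Nz)\in t^{-N}(\A_{>0}+t\OO)$, and this is nonzero by the uniqueness lemma. Directedness follows from $z=(\pi^N+z)-\pi^N$, since $\pi^N=t^{-N}\cdot1\in P$.

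Finally, for $P\cap\Qbar$, let $0\ne\alpha\in\Qbar\cap P$ with $\alpha=t^{-n}(a+tw)$. If $n>0$, then $a=t^n\alpha-tw\in t\OO$, which contradicts the lemma. If $n<0$, then $\alpha\in t\OO$, and since $\alpha$ is a unit this gives $t^{-1}\in\OO$, contradicting (b). So $n=0$ and $\alpha-a\in t\OO\cap\Qbar$. The lemma extends to $\Qbar\cap t\OO=\{0\}$ by the same unit argument, so $\alpha=a\in\A_{>0}$. Conversely, $\A_{>0}\subseteq P$ with $n=0$.

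The only non-formal inputs are (b) and (c), that is, the statements that $\OO\cap k(t)=D$ and that every maximal ideal of $\OO$ contains $t$. I expect the careful justification of these two facts to be the main point, and everything else reduces to the uniqueness lemma.
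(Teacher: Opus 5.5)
Your proof is correct, and most of it follows the paper. The shared parts are the uniqueness of level and leading coefficient from $\A\cap t\OO=\{0\}$, the explicit sum and product formulas, the certificate $\pi^N\pm z=t^{-N}(1\pm t^{N}z)$ with $N=m+1$, and the level argument for $P\cap\Qbar$.

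The genuine difference is how you obtain the two structural inputs, $1+t\OO\subseteq\OO^\times$ and $t^{-1}\notin\OO$.

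\emph{The paper's route.} It isolates a general localization criterion (Lemma~\ref{lem:unit} and Theorem~\ref{thm:local}). For any subring $R$ with $1\notin tR$ and $E=R[t^{-1}]$, it writes $(1+tx)^{-1}=t^{-m}y$ and repeatedly divides $y$ by $t$ until the inverse lies in $R$. It then checks $1\notin t\OO$ by multiplying a putative monic relation for $t^{-1}$ by $t^d$ and applying the residue map $\varepsilon$.

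\emph{Your route.} You use that $\OO$ is integral over the local ring $D$. Hence every maximal ideal of $\OO$ contracts to $tD$, so $t$ lies in the Jacobson radical of $\OO$. You get $t^{-1}\notin\OO$ from normality of the DVR $D$.

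\emph{What each buys.} Your route is quicker given standard commutative algebra and explains why $1+t\OO$ consists of units. It also does not need $\C=\OO[t^{-1}]$ for that step. The paper's route is entirely elementary and never uses integrality, so it applies to every subring satisfying \eqref{eq:local}. That generality matters in Theorem~\ref{thm:recovery}. There an admissible pair $(R,t)$ is an arbitrary subring of $\C$ satisfying \eqref{eq:local}, not necessarily integral over a local ring, yet \eqref{eq:units} and the group $G_t$ are still needed.

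Two small points of attribution. First, the fact you need in (c) is that in an integral extension a maximal ideal contracts to a maximal ideal, rather than lying over or going up as such. Second, $B$ is given as a transcendence basis of $\R/\Q$; it is also one for $\C/\Q$ because $\C=\R(i)$ is algebraic over $\R$.
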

\end{samepage}

Every nonzero element $p$ of $P$ can be uniquely expressed as
\begin{equation}\label{eq:directform}
 p=a\pi^n+\pi^{n-1}r,\qquad n\in\Z,\quad a\in\A_{>0},\quad r\in\OO.
\end{equation}
We call $n$ the \emph{level} of $p$ and $a$ its \emph{leading coefficient}. Uniqueness is proved in Proposition~\ref{prop:leading}.

We now ask which real generators give the same positive cone. Keep $k$ fixed and put
\[
 H_k=\{b\in\R:b\text{ is algebraic over }k\},\qquad X_k=\R\setminus H_k.
\]
For a real generator $T\in X_k$, define
\begin{equation}\label{eq:rebuilt}
\begin{gathered}
 t_T=T^{-1},\qquad
 \OO_T=\IntCl\bigl(k[t_T]_{(t_T)}\bigr),\\
 P_T=\{0\}\cup\bigcup_{n\in\Z}t_T^{-n}
                  (\A_{>0}+t_T\OO_T).
\end{gathered}
\end{equation}
Here $\IntCl$ denotes integral closure in $\C$. Taking $T=\pi$ gives $t_T=t$ and $\OO_T=\OO$, so $P_\pi$ is the cone $P$ of Theorem~\ref{thm:main}.

\begin{theorem}[Equivalence condition for generators]\label{thm:affine}
For $T,U\in X_k$,
\begin{equation}\label{eq:affine}
 P_T=P_U
 \quad\Longleftrightarrow\quad
 U=aT+b\quad(a\in\A_{>0},\ b\in H_k).
\end{equation}
The coefficients $a,b$ are unique.
\end{theorem}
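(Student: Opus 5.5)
The plan is to read everything off one valuation. Let $v_T$ be the valuation on $\C$ determined by $\OO_T$. Since $k[t_T]_{(t_T)}$ is a DVR and $\OO_T$ is its integral closure in the algebraically closed field $\C$, the ring $\OO_T$ should be a valuation ring with value group $\Q$ and residue field $\overline{k}$, the algebraic closure of $k$ inside $\C$. I would verify this as in Theorem~\ref{thm:main}, or simply invoke whatever structure result was used there. In these terms a nonzero $z$ lies in $P_T$ exactly when $z=T^n(a+\varepsilon)$ with $n\in\Z$, $a\in\A_{>0}$ and $v_T(\varepsilon)>0$. So $P_T$ is determined by two pieces of data: the valuation $v_T$, and the map from elements of integral value to residues. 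One consequence is immediate: $\A_{>0}+t_T\OO_T=\{x\in P_T:\ v_T(x)=0\}$, the units of $\OO_T$ lying in the cone.

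For ($\Leftarrow$), suppose $U=aT+b$. Then $t_U=U^{-1}=t_T(a+bt_T)^{-1}$. Since $b\in H_k$ is integral over $k$, we have $b\in\OO_T$, so $a+bt_T$ is a unit of $\OO_T$ with residue $a$. It follows that $t_U\in\OO_T$ and $t_T=t_U(a+bt_T)$. Hence $k[t_U]_{(t_U)}$ and $k[t_T]_{(t_T)}$ generate the same valuation: the first is dominated by $\OO_T$, and integral closure is compatible with this. So $\OO_U=\OO_T$; I would argue this via the uniqueness of the valuation ring of $\C$ dominating the DVR, which holds because the extension is algebraic. Next, $U^n=T^n(a+bt_T)^n$ with $a^n>0$. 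Therefore
\[
U^n(\A_{>0}+t_U\OO_U)=T^n\bigl(a^n+t_T\OO_T\bigr)(\A_{>0}+t_T\OO_T)=T^n(\A_{>0}+t_T\OO_T),
\]
which gives $P_U=P_T$.

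For ($\Rightarrow$), assume $P_T=P_U$. The first step is to recover the valuation ring from the cone. I would show
\[
\OO_T=\{z:\ c\pm z\in P_T \text{ for some } c\in\A_{>0}\}+\ldots,
\]
or, more robustly, that the elements of level $0$ are $\{x\in P:\ x^{-1}\in P,\ q-x,\ x-q'\in P\text{ for some } q,q'\in\Q_{>0}\}$. This uses inversion-closure and the leading-coefficient uniqueness of Proposition~\ref{prop:leading}. From these level-$0$ elements, $\OO_T$ is recovered as the set of $z$ with $1\pm\lambda z\in P$ for some $\lambda\in\Q_{>0}$, and the maximal ideal as the set of $z$ with $c\pm z\in P$ for all $c\in\Q_{>0}$. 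Hence $\OO_T=\OO_U$ and $v_T,v_U$ agree up to scaling. Since $U^{-1}\in P$ is a positive element, $v_T(U)$ must be an integer multiple of $v_T(T)$, and the level structure then forces $v_T(U)=v_T(T)=-1$ in normalized form. The reason is that the positive elements of minimal positive valuation are exactly those of level $-1$, and $t_U$ is among them for both cones.

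Consequently $u:=U/T=t_T/t_U$ is a unit of $\OO_T$ lying in $P_T$, so $u=a+\varepsilon$ with $a\in\A_{>0}$ and $v(\varepsilon)>0$. The main obstacle is upgrading this to $U-aT\in H_k$. We have $U-aT=T\varepsilon$ with $v(T\varepsilon)>-1$. I would try to show that $v(T\varepsilon)\ge0$ and then that $T\varepsilon$ is algebraic over $k$. The first attempt is to use that $U$ and $T$ are real and transcendental over $k$. The key observation is that $\C\cap\OO_T\cap\R$, for an element of the form $U-aT$, lies in $\overline{k}(T)$-type subfields. Indeed, $U$ generates the same DVR $k[t_U]_{(t_U)}$ after restriction to $k(U)$, so the restricted valuation on $k(T,U)$ is centered at infinity for both $T$ and $U$. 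If $U$ were transcendental over $k(T)$, I expect to find a positive element, built from a polynomial in $U$ and $T$, whose level differs in the two descriptions, contradicting $P_T=P_U$. If instead $U$ is algebraic over $k(T)$, I would compare the Puiseux expansions of $U$ in $t_T$. Agreement of leading coefficients at every level for all elements $U-cT$ with $c\in\A$ should force the expansion to have the form $aT+b+\sum_{j>0}c_jt_T^{j/e}$. The tail would then be killed by testing $\pm(U-aT-b')$ for $b'\in H_k$: these elements are positive of level $0$ in $P_U$ but have negative level in $P_T$ unless the tail vanishes. Finally, uniqueness of $a$ and $b$ is immediate, because $aT+b=a'T+b'$ with $T\notin H_k$ forces $a=a'$ and then $b=b'$.
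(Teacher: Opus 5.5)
Your argument rests on a false premise, and the decisive step of ($\Rightarrow$) is missing.

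\textbf{The valuation framework is wrong.} The ring $\OO_T$ is not a valuation ring. Nor is the valuation ring of $\C$ dominating $k[t_T]_{(t_T)}$ unique: this DVR is not henselian, so its valuation has several conjugate extensions to $\C$, and $\OO_T$ is their intersection. For instance, let $z^2=1+t_T$ and $x=(z-1)/(z+1)$. Then $x$ has value $1$ at an extension where $z$ has residue $1$, and value $-1$ at a conjugate extension where $z$ has residue $-1$. So neither $x$ nor $x^{-1}$ lies in $\OO_T$. Even for a single extension $w$, the tails in $P_T$ lie in $t_T\OO_T$, not in $\{x:w(x)>0\}$; for example $1+\sqrt{t_T}\notin P_T$. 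Your set $\{z:1\pm\lambda z\in P\}$ is $\A+t_T\OO_T$, not $\OO_T$ (it misses $i$). The ring must instead be recovered as the multiplier ring of $I(P)=t_T\OO_T$, as in Theorem~\ref{thm:recovery}. This gives $\OO_T=\OO_U$ and $b:=U-aT\in\OO_T$ at once, so your worry that only $v(T\varepsilon)>-1$ is known comes from the wrong description of $P_T$.

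Your ($\Leftarrow$) direction can be repaired without any uniqueness claim. We have $t_U\in t_T\OO_T$, and $g(t_U)\in g(0)(1+t_T\OO_T)\subseteq\OO_T^\times$ whenever $g(0)\neq0$, by Lemma~\ref{lem:unit} with $a,b\in\OO_T$. Hence $k[t_U]_{(t_U)}\subseteq\OO_T$, and transitivity of integrality gives $\OO_U\subseteq\OO_T$. The reverse inclusion is symmetric.

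\textbf{The real gap is proving that $b$ is algebraic over $k$.} Your case ``$U$ transcendental over $k(T)$'' never occurs, since $\C$ is algebraic over $k(T)$. In the other case, Puiseux expansions at $t_T=0$ cannot detect the obstruction. Once $b\in\OO_T$, every valuation ring dominating $k[t_T]_{(t_T)}$ contains $b$. It therefore puts $t_U=t_T(a+bt_T)^{-1}$ in its maximal ideal and also dominates $k[t_U]_{(t_U)}$. The discrepancy lives only at valuation rings dominating $k[t_U]_{(t_U)}$ in which $T$ stays finite while $b$ has a pole.

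For example, take $T=\pi$ and $U=\pi+\pi^{-1/2}$. Then $U-T=\pi^{-1/2}$ is a square root of $t_T$, a nonzero Puiseux tail at $t_T=0$. Here $b=\pi^{-1/2}\in\OO_T\setminus\OO_U$, and this is witnessed only by valuations with $\pi\in\mathfrak m_V$. Your proposed test with $\pm(U-aT-b')$ also has no content: these two elements cannot both be positive, and you give no reason why either one should be.

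The missing idea is the paper's valuation argument. Suppose $b$ is transcendental over $k(a)$, and choose a valuation ring $V\supseteq k(a)$ of $\C$ with $b^{-1}\in\mathfrak m_V$. If $T\notin V$, then $t_T\in\mathfrak m_V$, so $V\supseteq k[t_T]_{(t_T)}$. Since $V$ is integrally closed, $V\supseteq\OO_T\ni b$, which is impossible. Hence $T\in V$. Likewise $U\in V$, because $b\in\OO_U$. Then $b=U-aT\in V$, a contradiction.
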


To prove necessity, we recover $\OO_T$ as the multiplier ring of the infinitesimals. Equality of cones gives an affine relation, and a valuation argument forces the translation term to be algebraic over $k$.

None of the orders in this class is a lattice order. More precisely, $0$ and $i$ have no least upper bound for any $P_T$ (Proposition~\ref{prop:nonlattice}). We conjecture that the Birkhoff--Pierce problem has a negative answer.

\begin{conjecture}
The complex field admits no lattice order compatible with its ring operations.
\end{conjecture}

\section{A construction of positive cones}\label{sec:local}
We first construct positive cones from a subring $R$ of a field $E$. Theorem~\ref{thm:main} will follow by applying this construction to the integral closure $\OO$ introduced above.

We begin with a lemma on units that will be used in the construction. Here $R^\times$ denotes the multiplicative group of units of $R$.

\begin{lemma}\label{lem:unit}
Let $R$ be a subring of a field $E$, and suppose
\begin{equation}\label{eq:local}
 0\ne t\in R,\qquad 1\notin tR,\qquad E=R[t^{-1}].
\end{equation}
Then $1+tR\subseteq R^\times$. If $F$ is any subfield of $R$, then $tR\cap F=\{0\}$.
\end{lemma}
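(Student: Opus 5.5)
Both claims follow from $E=R[t^{-1}]$, and the second is a consequence of the first. The main step is to show $1+tr\in R^\times$ for $r\in R$.

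Since $t\ne0$, the element $x=1+tr$ is nonzero in $E$. Indeed, if $1+tr=0$ then $1=t(-r)\in tR$, contradicting \eqref{eq:local}. Hence $x^{-1}\in E=R[t^{-1}]$, so there are $m\ge0$ and $s\in R$ with $x^{-1}=s\,t^{-m}$. Equivalently,
\[
 t^m=s(1+tr).
\]
I would argue by induction on $m$ that we may take $m=0$. If $m=0$, then $s=x^{-1}\in R$ and we are done. If $m\ge1$, reduce modulo $tR$: $s=t^m-str\in tR$. Write $s=ts'$ with $s'\in R$, so $t^m=ts'(1+tr)$. Dividing by $t$ in the field $E$ gives $t^{m-1}=s'(1+tr)$, and this lowers $m$ by one. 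The induction therefore ends with $s\in R$ and $s(1+tr)=1$, so $1+tR\subseteq R^\times$.

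I expect the descent to be the only delicate point. The thing to avoid is using "$R$ is local" or any valuation, since neither is assumed. The descent needs only that $t$ is a nonzerodivisor (true because $E$ is a field) and that $s\in tR$ whenever $t^m\in s+tR$ with $m\ge1$; this holds because $t^m-str\in tR$.

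For the second claim, let $f\in tR\cap F$ with $f\ne0$. Since $F$ is a subfield of $R$, we have $-f^{-1}\in F\subseteq R$. Write $f=tr$. Then
\[
 -1=t\,r\,(-f^{-1})\cdot f\cdot f^{-1}\cdots,
\]
or more simply $1=f\cdot f^{-1}=t\,(r f^{-1})\in tR$, which contradicts $1\notin tR$. Hence $tR\cap F=\{0\}$. This step does not need the first part at all.
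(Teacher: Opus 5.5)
Your proof is correct and matches the paper's argument essentially step for step. You write $(1+tr)^{-1}=t^{-m}s$, repeatedly peel a factor of $t$ off $s$ via $s=t(t^{m-1}-sr)$ and cancel until $m=0$, and for the second claim you use $1=f\cdot f^{-1}\in tR$. In a final write-up, delete the garbled display for $-1$ and the opening remark that the second claim follows from the first, since, as you note yourself, it does not.
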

\begin{proof}
Let $w=1+tx$ with $x\in R$. Properness of $tR$ gives $w\ne0$. Write $w^{-1}=t^{-m}y$ with $m\ge0$ and $y\in R$, so that $wy=t^m$. If $m\ge1$, then
\[
 y=t^m-txy=t(t^{m-1}-xy).
\]
Thus $y=ty_1$ with $y_1\in R$, and cancellation gives $wy_1=t^{m-1}$. Repeating this step yields an inverse of $w$ in $R$. When $m=0$, the conclusion is immediate.

If $0\ne a\in tR\cap F$, then $a^{-1}\in F\subseteq R$, so $1=a^{-1}a\in tR$, a contradiction.
\end{proof}

\begin{samepage}
The following theorem gives the construction used to prove Theorem~\ref{thm:main}. It establishes the cone axioms, directedness and closure under inversion of nonzero positive elements.

\begin{theorem}[Localization criterion]\label{thm:local}
Suppose \eqref{eq:local} holds and $R$ contains a totally ordered subfield $F$. Then
\begin{equation}\label{eq:generalcone}
 P_{R}=\{0\}\cup\bigcup_{n\in\Z}t^{-n}(F_{>0}+tR)
\end{equation}
is the positive cone of a directed ring order on $E$. It contains $1$ and satisfies
\begin{equation}\label{eq:inverse}
 (\nonzero {P_{R}})^{-1}=\nonzero {P_{R}}.
\end{equation}
If $t^mz\in R$ for some integer $m\ge0$, then $N=m+1$ satisfies
\begin{equation}\label{eq:bounds}
 t^{-N}+z,\ t^{-N}-z\in\nonzero {P_{R}}.
\end{equation}
\end{theorem}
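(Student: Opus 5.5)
The plan is to set $S=F_{>0}+tR$ and first show that $S$ is a multiplicatively closed subset of $R^\times$, closed under addition and under taking inverses. Every $s\in S$ has the form $s=a(1+ta^{-1}x)$ with $a\in F_{>0}\subseteq R^\times$ and $x\in R$. By Lemma~\ref{lem:unit}, $1+tR\subseteq R^\times$, so $s\in R^\times$. For the inverse, write $s^{-1}=a^{-1}(1+ty)^{-1}$. Since $(1+ty)^{-1}\in R$ and $(1+ty)^{-1}-1=-ty(1+ty)^{-1}\in tR$, we get $s^{-1}\in a^{-1}+tR\subseteq S$. Additive and multiplicative closure of $S$ follow because $F_{>0}$ is closed under sums and products and $tR$ is an ideal of $R$.

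Next, I would show that every $p\in\nonzero{P_R}$ has a unique level. Suppose $t^{-n}s=t^{-m}s'$ with $n>m$ and $s,s'\in S$. Then $s=t^{n-m}s'\in tR$, so $a=s-tx\in tR\cap F$ with $a>0$. This contradicts the second part of Lemma~\ref{lem:unit}.

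With levels available, the cone axioms go as follows.
\begin{enumerate}[label=(\roman*)]
\item Products: $t^{-n}s\cdot t^{-m}s'=t^{-(n+m)}ss'$, which lies in $P_R$.
\item Sums: take $t^{-n}s+t^{-m}s'$ with $n\ge m$ and factor it as $t^{-n}(s+t^{n-m}s')$. If $n>m$, then $s+t^{n-m}s'\in s+tR\subseteq S$. If $n=m$, use $S+S\subseteq S$.
\item $P_R\cap(-P_R)=\{0\}$: suppose $t^{-n}s=-t^{-m}s'$. By uniqueness of levels, $n=m$ after comparing, since $-s'=-(a'+tx')$ and the uniqueness argument only used $s,s'\in R^\times\setminus tR$. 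Hence $s+s'=0$. But $s+s'\in S$, so $s+s'=a+a''+t(\cdots)$ with $a+a''\in F_{>0}$, and $0=s+s'$ would put $a+a''\in tR\cap F=\{0\}$. This is a contradiction.
\end{enumerate}
Finally, $1\in F_{>0}\subseteq S$, and for inverses $(t^{-n}s)^{-1}=t^{n}s^{-1}\in P_R$ by the first step. This gives \eqref{eq:inverse}.

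For \eqref{eq:bounds}, let $t^mz=x\in R$ and $N=m+1$. Then
\[
 t^{-N}\pm z=t^{-N}(1\pm t^{N}z)=t^{-N}(1\pm tx)\in t^{-N}S\subseteq\nonzero{P_R}.
\]
Directedness then follows from \eqref{eq:local}. Since $E=R[t^{-1}]$, every $z\in E$ satisfies $t^mz\in R$ for some $m\ge0$. Then $z=\tfrac12\bigl((t^{-N}+z)-(t^{-N}-z)\bigr)$, and $\tfrac12\in F_{>0}$ keeps both terms in $P_R$, so $P_R-P_R=E$.

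The step I expect to be the main obstacle is the antisymmetry argument in (iii), specifically getting the levels of the two sides to match. For this I would prove the more general fact that $t^{k}u\notin R^\times$ for any $k\ge1$ and $u\in R$. Indeed, $t\notin R^\times$ because $1\notin tR$. This gives the level comparison without using any sign information.
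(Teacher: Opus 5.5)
Your proof is correct and follows the paper's argument. Both write elements in the normal form $t^{-n}(a+tr)$ to handle sums and products, use Lemma~\ref{lem:unit} to invert $a+tr$, and use the certificate $t^{-N}(1\pm tx)$ for directedness. One difference is that the level comparison you flag as the main obstacle is unnecessary: your step (ii) already shows that a sum of two nonzero elements of $P_R$ lies in $t^{-n}S\subseteq t^{-n}R^\times$, so it is nonzero and $P_R\cap(-P_R)=\{0\}$ follows at once, which is how the paper argues.
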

\end{samepage}
\begin{proof}
Lemma~\ref{lem:unit} shows that $a+tr\ne0$ for $a\in F_{>0}$ and $r\in R$. Let
\[
 x=t^{-n}(a+tr),\qquad y=t^{-m}(b+ts),
\]
where $a,b\in F_{>0}$ and $r,s\in R$. Assume $n\ge m$. If $n=m$, then
\begin{equation}\label{eq:addsame}
 x+y=t^{-n}\bigl((a+b)+t(r+s)\bigr).
\end{equation}
If $n>m$, then
\begin{equation}\label{eq:adddifferent}
 x+y=t^{-n}\bigl(a+t[r+t^{n-m-1}(b+ts)]\bigr).
\end{equation}
In both cases $x+y\in\nonzero{P_{R}}$, proving closure under addition and \mbox{$P_{R}\cap(-P_{R})=\{0\}$}.

Closure under multiplication follows from
\begin{equation}\label{eq:product}
 xy=t^{-(n+m)}\bigl(ab+t(as+br+trs)\bigr)\in\nonzero{P_{R}}.
\end{equation}
Also $1\in P_{R}$.

For $z\in E$, localization gives $r=t^mz\in R$ for some $m\ge0$. With $N=m+1$,
\begin{equation}\label{eq:localcertificate}
 t^{-N}\pm z=t^{-N}(1\pm tr)\in\nonzero{P_{R}}.
\end{equation}
Since $t^{-N}\in P_{R}$, the identity $z=(t^{-N}+z)-t^{-N}$ proves directedness.

Finally, put $v=r/a\in R$. Lemma~\ref{lem:unit} gives $u=(1+tv)^{-1}\in R$, and
\[
 (a+tr)^{-1}=a^{-1}u=a^{-1}-ta^{-1}vu\in a^{-1}+tR.
\]
Hence $x^{-1}\in t^n(F_{>0}+tR)$. Applying inversion twice proves \eqref{eq:inverse}.
\end{proof}

\begin{remark}
The ring $R$ need not be a valuation ring, and $R/tR$ need not be a field. The set $F_{>0}+tR$ in \eqref{eq:generalcone} consists of the elements $x\in R$ such that $x-a\in tR$ for some $a\in F_{>0}$. Equivalently, $x$ and $a$ have the same residue modulo $tR$. This $a$ is unique because $F\cap tR=\{0\}$ by Lemma~\ref{lem:unit}.
\end{remark}

\begin{proposition}[Unique leading coefficients]\label{prop:leading}
For $p\in\nonzero{P_{R}}$, write $p=t^{-n}(a+tr)$ with $n\in\Z$, $a\in F_{>0}$ and $r\in R$. Then $n$, $a$ and $r$ are uniquely determined by $p$. Call $n$ the \emph{level} $\ell(p)$ and $a$ the \emph{leading coefficient} $\lc(p)$. For nonzero positive $p,q$,
\begin{align}
 \ell(pq)&=\ell(p)+\ell(q),&\lc(pq)&=\lc(p)\lc(q),\label{eq:leadingproduct}\\
 \ell(p+q)&=\max\{\ell(p),\ell(q)\}.&&\label{eq:leadingsum}
\end{align}
If the levels agree, the leading coefficients add. Otherwise, the leading coefficient is that of the summand with larger level. Also
\begin{equation}\label{eq:leadinginverse}
 \ell(p^{-1})=-\ell(p),\qquad \lc(p^{-1})=\lc(p)^{-1}.
\end{equation}
\end{proposition}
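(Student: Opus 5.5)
The plan is to reduce everything to two facts about the subring $R$: the element $t$ is not a unit of $R$, and $F\cap tR=\{0\}$ (Lemma~\ref{lem:unit}). Once uniqueness is settled, the arithmetic rules will be read off the explicit formulas already written in the proof of Theorem~\ref{thm:local}.

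\emph{Uniqueness of the level.} Suppose $t^{-n}(a+tr)=t^{-m}(b+ts)$ with $n>m$. Multiplying by $t^{n}$ gives $a+tr=t^{n-m}(b+ts)\in tR$, so $a\in tR\cap F$. Since $a>0$, this contradicts $tR\cap F=\{0\}$ from Lemma~\ref{lem:unit}. Hence $n=m$.

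\emph{Uniqueness of $a$ and $r$.} With $n=m$ fixed, we get $a+tr=b+ts$, so $a-b=t(s-r)\in tR\cap F=\{0\}$ and therefore $a=b$. Then $tr=ts$, and since $E$ is a field and $t\ne0$, cancellation gives $r=s$.

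\emph{The product and sum rules.} By uniqueness, the level and leading coefficient of any representation equal $\ell$ and $\lc$, so it suffices to exhibit one representation.
\begin{itemize}
\item Equation \eqref{eq:product} writes $pq=t^{-(n+m)}(ab+t(\cdots))$, giving \eqref{eq:leadingproduct}.
\item Equations \eqref{eq:addsame} and \eqref{eq:adddifferent} give \eqref{eq:leadingsum}, together with the stated description of the leading coefficient of the sum. In the case of unequal levels, the leading coefficient is that of the summand with larger level.
\end{itemize}

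\emph{The inverse rule.} The end of the proof of Theorem~\ref{thm:local} shows $x^{-1}=t^{n}(a^{-1}+t r')$ with $r'\in R$, which is a representation of level $-n$ and leading coefficient $a^{-1}$. Uniqueness then gives \eqref{eq:leadinginverse}.

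\emph{Main obstacle.} The only delicate step is the level comparison: one must remember that $n-m\ge1$ puts $a$ in $tR$. All remaining steps are direct bookkeeping.
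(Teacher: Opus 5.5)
Your proposal is correct and follows essentially the same route as the paper. Uniqueness of the level comes from $a+tr=t^{n-m}(b+ts)\in tR$ contradicting $tR\cap F=\{0\}$. Then $a-b\in tR\cap F$ and cancellation give $a=b$ and $r=s$. The arithmetic rules are read off from \eqref{eq:addsame}--\eqref{eq:product} and the inverse computation at the end of the proof of Theorem~\ref{thm:local}, exactly as in the paper, which leaves your intermediate steps (such as $a\in tR$ because $tr\in tR$) implicit.
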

\begin{proof}
Suppose $t^{-n}(a+tr)=t^{-m}(b+ts)$. If $n>m$, then $a+tr=t^{n-m}(b+ts)\in tR$, contrary to $tR\cap F=\{0\}$. The case $m>n$ is identical. Thus $n=m$, and $a-b\in tR\cap F$ gives $a=b$. Cancellation then gives $r=s$. The formulas follow from \eqref{eq:addsame}--\eqref{eq:product} and the inverse calculation above.
\end{proof}

The positive units are described by $G_t=F_{>0}+tR$. The product and inverse calculations show that $G_t$ is a multiplicative subgroup of $R^\times$, and
\begin{equation}\label{eq:units}
 P_{R}\cap R^\times=G_t,\qquad
 P_{R}\cap R\subseteq F_{\ge0}+tR.
\end{equation}
Indeed, an element of positive level cannot lie in $R$. An element of negative level lies in $tR$, so it is not a unit. An element of level zero belongs to $G_t$.

\subsection{Application to the complex field}\label{sec:integral}
Theorem~\ref{thm:main} is a corollary of Theorem~\ref{thm:local}. We now verify that the ring and element used to construct the positive cone on $\C$ satisfy its hypotheses, taking $R=\OO$, $E=\C$ and $F=\A$.

Both $\pi$ and $t=\pi^{-1}$ are transcendental over $k$. Indeed, suppose that $\pi$ satisfies a nonzero polynomial with coefficients in $k$. All its coefficients are rational functions over $\Q$ in some finite subset of $B\setminus\{\pi\}$. Multiplying by a common nonzero denominator gives a nonzero polynomial relation over $\Q$ among $\pi$ and those elements of $B$, contradicting the algebraic independence of $B$. If $t$ were algebraic over $k$, then its inverse $\pi$ would also be algebraic over $k$. Moreover,
\begin{equation}\label{eq:algebraicity}
 k(t)=\Q(B),\qquad \R/k(t)\text{ is algebraic},\qquad
 \C/k(t)\text{ is algebraic}.
\end{equation}
The last statement follows from $\C=\R(i)$.

Substitution $X\mapsto t$ identifies $k(X)$ with $k(t)$. Therefore
\begin{equation}\label{eq:residue}
 \varepsilon:D\longrightarrow k,\qquad
 \varepsilon\!\left(\frac{f(t)}{g(t)}\right)=\frac{f(0)}{g(0)}
\end{equation}
is well-defined and has kernel $tD$. An element of $D$ with nonzero residue modulo $tD$ is a unit. Thus $D$ is local with maximal ideal $tD$.

For $a\in k(t)\setminus\{0\}$, write
\[
 a=t^v\frac{f(t)}{g(t)},\qquad f(0)g(0)\ne0,
\]
and define $\ordt(a)=v\in\Z$. Thus $a\in D$ exactly when $v\ge0$. Set $\ordt(0)=+\infty$.

The integral elements form a $D$-subalgebra $\OO$ of $\C$ \cite[Lemma 10.36.7]{StacksI}. Every $\alpha\in\Qbar$ satisfies a monic polynomial over $\Q\subseteq D$, so $\Qbar\subseteq\OO$. We also have $1\notin t\OO$. Indeed, if $t^{-1}$ were integral over $D$, a monic relation
\[
 t^{-d}+c_1t^{-(d-1)}+\cdots+c_d=0,\qquad c_j\in D,
\]
would give $1=0$ after multiplication by $t^d$ and application of $\varepsilon$. It remains to check $\C=\OO[t^{-1}]$.

\begin{lemma}[Clearing poles]\label{lem:scaling}
For every $z\in\C$, some integer $m\ge0$ satisfies $t^mz\in\OO$. More explicitly, suppose
\begin{equation}\label{eq:polynomial}
 z^d+a_1(t)z^{d-1}+\cdots+a_d(t)=0,\qquad a_j(t)\in k(t).
\end{equation}
Set $s_j=\max\{0,-\ordt(a_j)\}$ when $a_j\ne0$, and $s_j=0$ otherwise. Any integer
\begin{equation}\label{eq:exponent}
 m\ge\max_{1\le j\le d}\left\lceil\frac{s_j}{j}\right\rceil
\end{equation}
satisfies $t^mz\in\OO$. In particular, $\OO[t^{-1}]=\C$.
\end{lemma}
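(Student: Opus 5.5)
The plan is to multiply the given relation by $t^{md}$ and read it as a monic equation for $w=t^mz$. First I would note that every $z\in\C$ satisfies some monic relation \eqref{eq:polynomial} over $k(t)$, since $\C/k(t)$ is algebraic by \eqref{eq:algebraicity}; the minimal polynomial, normalized to be monic, will do. Multiplying \eqref{eq:polynomial} by $t^{md}$ gives
\[
 w^d+t^{m}a_1(t)\,w^{d-1}+t^{2m}a_2(t)\,w^{d-2}+\cdots+t^{dm}a_d(t)=0 .
\]
This is a monic equation in $w$ whose coefficients are $c_j=t^{jm}a_j(t)$.

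Next I would check that each $c_j$ lies in $D$. If $a_j=0$ this is trivial. Otherwise $\ordt(c_j)=jm+\ordt(a_j)$. Condition \eqref{eq:exponent} gives $jm\ge s_j\ge-\ordt(a_j)$, so $\ordt(c_j)\ge0$, and by the characterization of $D$ via $\ordt$ this means $c_j\in D$. Hence $w$ is integral over $D$, so $w\in\OO$. The bound $m\ge0$ is automatic because $s_j\ge0$.

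For the last assertion, $\OO[t^{-1}]\subseteq\C$ is clear. Conversely, $z=t^{-m}(t^mz)\in\OO[t^{-1}]$. Nothing here is a real obstacle. The only point needing care is that the additivity $\ordt(t^{jm}a_j)=jm+\ordt(a_j)$ follows directly from the definition $a=t^v f/g$ with $f(0)g(0)\ne0$.
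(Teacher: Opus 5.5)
Your proposal is correct and follows the paper's proof essentially step for step. Both arguments obtain a monic relation from the algebraicity of $\C/k(t)$, derive $jm+\ordt(a_j)\ge0$ from \eqref{eq:exponent} to get $t^{jm}a_j\in D$, and read the rescaled equation as a monic relation over $D$ for $w=t^mz$.
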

\begin{proof}
A monic relation \eqref{eq:polynomial} exists by \eqref{eq:algebraicity}. Condition \eqref{eq:exponent} implies $mj+\ordt(a_j)\ge0$ for every nonzero coefficient, so $t^{mj}a_j\in D$. For $w=t^mz$, multiplication of \eqref{eq:polynomial} by $t^{md}$ gives
\begin{equation}\label{eq:scaledpolynomial}
 w^d+t^ma_1w^{d-1}+t^{2m}a_2w^{d-2}+\cdots+t^{md}a_d=0.
\end{equation}
This equation is monic over $D$, proving $w\in\OO$. Hence $z=t^{-m}w\in\OO[t^{-1}]$. The reverse inclusion is immediate.
\end{proof}

The preceding verification and Lemma~\ref{lem:scaling} establish \eqref{eq:local} with $R=\OO$ and $E=\C$. Since $\Qbar\subseteq\OO$, Lemma~\ref{lem:unit} also gives $t\OO\cap\Qbar=\{0\}$. Thus
\begin{equation}\label{eq:proper}
 1\notin t\OO,\qquad t\OO\cap\Qbar=\{0\}.
\end{equation}

\begin{corollary}\label{cor:complex}
For every $T\in X_k$, the cone $P_T$ defines a directed ring order on $\C$. It contains $1$, is closed under inversion of nonzero elements, and satisfies $P_T\cap\Qbar=\A_{\ge0}$. For each $z\in\C$, there is an integer $N\ge1$ such that $T^N+z,T^N-z\in\nonzero{P_T}$.
\end{corollary}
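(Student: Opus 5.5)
The plan is to repeat the verification of Section~\ref{sec:integral} with $\pi$ replaced by an arbitrary $T\in X_k$, and then apply Theorem~\ref{thm:local} with $R=\OO_T$, $E=\C$, $F=\A$ and $t=t_T$.

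The first step is to show that $T$, and hence $t_T=T^{-1}$, is transcendental over $k$. This holds by the definition of $X_k$, since $T\in\R$ is not algebraic over $k$. We then need $\C/k(t_T)$ to be algebraic. Here $k(t_T)=k(T)$, and $\Q(B)=k(\pi)$ has $\C$ algebraic over it. Since $T$ is transcendental over $k$ and $\pi$ is algebraic over $\R$'s transcendence structure, I argue by transcendence degrees. The field $\R$ has transcendence degree $|B|$ over $\Q$, and $k\cup\{T\}$ is algebraically independent over $\Q$ because $T\notin H_k$. I would like to conclude that $k\cup\{T\}$ is a transcendence basis of $\R$, but for infinite $B$ cardinality alone does not suffice. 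The correct argument uses the exchange lemma. The element $T$ is algebraic over $k(\pi)$ but not over $k$, so $\pi$ is algebraic over $k(T)$. Hence $k(T)\subseteq\R$ has $\R$ algebraic over it, and $\C=\R(i)$ is algebraic over $k(T)$ as well. Justifying this exchange step is the one point that genuinely differs from the $\pi$ case, and I expect it to be the main (if mild) obstacle. It requires a minimal-polynomial argument: write a relation $\Phi(\pi,T)=0$ over $k$ and observe that it must involve $\pi$ nontrivially, since otherwise $T$ would be algebraic over $k$.

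Next I transport the local-ring facts verbatim. The ring $D_T=k[t_T]_{(t_T)}$ is local with residue map $\varepsilon_T$ defined exactly as in \eqref{eq:residue}. Its integral closure $\OO_T$ contains $\Qbar$. The same monic-relation argument shows $1\notin t_T\OO_T$. Lemma~\ref{lem:scaling}, whose proof used only that $\C/k(t)$ is algebraic and that $\ordt$ is defined on $k(t)$, gives $\C=\OO_T[t_T^{-1}]$. Hence \eqref{eq:local} holds.

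Theorem~\ref{thm:local} then yields the cone axioms, directedness, $1\in P_T$ and closure under inversion. The bound \eqref{eq:bounds} provides $N\ge1$ with $t_T^{-N}\pm z=T^N\pm z\in\nonzero{P_T}$.

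Finally, for $P_T\cap\Qbar=\A_{\ge0}$, one inclusion is immediate because $\A_{>0}\subseteq P_T$ at level $0$. For the other, take $0\ne\alpha\in P_T\cap\Qbar$. Then $\alpha\in\OO_T^\times$, since $\alpha^{-1}\in\Qbar\subseteq\OO_T$. By \eqref{eq:units}, $\alpha\in G_{t_T}$, so $\alpha=a+t_Tr$ with $a\in\A_{>0}$. Then $\alpha-a\in t_T\OO_T\cap\Qbar=\{0\}$ by Lemma~\ref{lem:unit}, which gives $\alpha=a\in\A_{>0}$.
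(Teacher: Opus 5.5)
Your proposal is correct and follows the paper's proof. You carry the Section~\ref{sec:integral} verification over to $t_T$, apply Theorem~\ref{thm:local}, and handle $P_T\cap\Qbar=\A_{\ge0}$ with the same unit argument via \eqref{eq:units} and $t_T\OO_T\cap\Qbar=\{0\}$. The only difference is that you show $\C$ is algebraic over $k(T)$ by an explicit exchange argument ($\pi$ is a root of the nonzero, positive-degree polynomial $\Phi(X,T)\in k(T)[X]$), whereas the paper cites $\operatorname{trdeg}_k\R=1$; your step just unwinds that fact, and the cardinality remark you abandon is not needed.
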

\begin{proof}
Since $\operatorname{trdeg}_k\R=1$, the field $\C$ is algebraic over $k(T)$ for every $T\in X_k$. The preceding verification therefore applies with $t=t_T$ and $\OO=\OO_T$. In particular, $\OO_T[t_T^{-1}]=\C$ and $t_T\OO_T\cap\Qbar=\{0\}$. Theorem~\ref{thm:local} now gives the cone axioms, directedness and closure under inversion. If $r=t_T^mz\in\OO_T$ and $N=m+1$, then
\begin{equation}\label{eq:explicitcertificate}
 T^N\pm z=t_T^{-N}(1\pm t_Tr)\in\nonzero{P_T}.
\end{equation}
Every nonzero $\alpha\in\Qbar$ is a unit of $\OO_T$, since $\alpha^{-1}\in\Qbar\subseteq\OO_T$. If also $\alpha\in P_T$, \eqref{eq:units} gives $\alpha=a+t_Tr$ with $a\in\A_{>0}$. Then $\alpha-a\in t_T\OO_T\cap\Qbar=\{0\}$, so $\alpha=a$. The reverse inclusion $\A_{\ge0}\subseteq P_T$ holds by definition.
\end{proof}

Taking $T=\pi$ proves Theorem~\ref{thm:main}.\qed

With $\Q$ as the ordered coefficient field, the same construction applies to every field covered by \cite[Corollary 4.2]{SY}.

\begin{corollary}\label{cor:generalfields}
Let $E$ be a field of characteristic zero with $\operatorname{trdeg}_{\Q}E>0$. Choose a transcendence basis $\mathcal T$ of $E/\Q$ and an element $\theta\in\mathcal T$. Put
\[
 k_0=\Q(\mathcal T\setminus\{\theta\}),\qquad s=\theta^{-1},
\]
and let $R$ be the integral closure of $k_0[s]_{(s)}$ in $E$. Then
\begin{equation}\label{eq:fieldcone}
 P_E=\{0\}\cup\bigcup_{n\in\Z}s^{-n}(\Q_{>0}+sR)
\end{equation}
is the positive cone of a directed ring order on $E$. It contains $1$ and is closed under inversion of nonzero elements.
\end{corollary}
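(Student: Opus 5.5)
The plan is to apply Theorem~\ref{thm:local} with $R$ the integral closure of $D_0=k_0[s]_{(s)}$ in $E$, with $t=s$ and $F=\Q$. The hypotheses in \eqref{eq:local} are checked exactly as in Section~\ref{sec:integral}, with $k_0$ in place of $k$ and $E$ in place of $\C$. First, $\Q$ is a subfield of $R$: every rational number is integral over $\Q\subseteq D_0$. It is totally ordered in its usual order.

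Next come $s\neq0$ and $1\notin sR$. Since $\theta\in\mathcal T$ is algebraically independent from $\mathcal T\setminus\{\theta\}$, both $\theta$ and $s$ are transcendental over $k_0$. This has two consequences. The residue map $\varepsilon:D_0\to k_0$, $f(s)/g(s)\mapsto f(0)/g(0)$, is well defined with kernel $sD_0$. The order function $\operatorname{ord}_s$ on $k_0(s)$ is also defined. Now suppose $1\in sR$. Then $s^{-1}$ would be integral over $D_0$, so
\[
s^{-d}+c_1s^{-(d-1)}+\cdots+c_d=0,\qquad c_j\in D_0 .
\]
Multiplying by $s^d$ and applying $\varepsilon$ gives $1=0$, a contradiction.

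The remaining hypothesis is $E=R[s^{-1}]$. Here $k_0(s)=\Q(\mathcal T)$, and $E$ is algebraic over $\Q(\mathcal T)$ because $\mathcal T$ is a transcendence basis. So every $z\in E$ satisfies a monic polynomial $z^d+a_1z^{d-1}+\cdots+a_d=0$ with $a_j\in k_0(s)$. The scaling argument of Lemma~\ref{lem:scaling} then goes through verbatim. Choose $m\ge\max_j\lceil s_j/j\rceil$, with $s_j$ defined through $\operatorname{ord}_s$. Then $w=s^mz$ satisfies a monic equation over $D_0$, so $w\in R$ and $z=s^{-m}w\in R[s^{-1}]$.

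With \eqref{eq:local} in hand, Theorem~\ref{thm:local} shows that $P_E=P_R$ is the positive cone of a directed ring order on $E$, that it contains $1$, and that its nonzero elements are closed under inversion by \eqref{eq:inverse}. No step is a real obstacle. The only point needing care is that the earlier verification never used special properties of $\C$ beyond algebraicity over $k(t)$. In particular, the containment $\Qbar\subseteq\OO$ is not needed, since here only $\Q\subseteq R$ is used. I would state this explicitly rather than repeating the computations.
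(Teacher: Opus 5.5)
Your proposal is correct and follows the paper's proof essentially step for step. You apply Theorem~\ref{thm:local} with $t=s$ and $F=\Q$, obtain $E=R[s^{-1}]$ from the clearing-poles argument, and obtain $1\notin sR$ by evaluating at $s=0$.
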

\begin{proof}
The field $E$ is algebraic over $k_0(s)$. The clearing-poles argument of Lemma~\ref{lem:scaling} gives $E=R[s^{-1}]$. Evaluation at $s=0$ shows that $s^{-1}$ cannot be integral over $k_0[s]_{(s)}$: a monic relation would give $1=0$ after multiplication by a power of $s$. Thus $1\notin sR$. Since $\Q\subseteq R$, Theorem~\ref{thm:local} applies with $t=s$ and $F=\Q$.
\end{proof}

\begin{example}[The elements $i$ and $\sqrt\pi$]
Since $i\in\Qbar\subseteq\OO$, we have
\[
 \pi+i=t^{-1}(1+ti)\in\nonzero P,\qquad
 \pi-i=t^{-1}(1-ti)\in\nonzero P.
\]
Thus $-\pi\prec i\prec\pi$, and $i=(\pi+i)-\pi$ is a difference of two positive elements. Nevertheless, $i$ is incomparable with zero, since neither $i$ nor $-i$ belongs to $P\cap\Qbar=\A_{\ge0}$.

Now let $z=\sqrt\pi$ be the usual positive real square root. The equation $z^2-t^{-1}=0$ allows $m=1$ in Lemma~\ref{lem:scaling}. The element $r=tz$ satisfies $r^2-t=0$, so $r\in\OO$. Hence
\[
 \pi^2\pm\sqrt\pi=t^{-2}(1\pm tr)\in\nonzero P.
\]
In fact, $\sqrt\pi$ is also incomparable with zero. If $p=\sqrt\pi$ or $p=-\sqrt\pi$ were positive, then $p^2=\pi$ would give $2\ell(p)=\ell(\pi)=1$, which is impossible for $\ell(p)\in\Z$. The order on real elements therefore differs from the usual order.
\end{example}

\begin{proposition}\label{prop:nonlattice}
The elements $0$ and $i$ have no least upper bound in any of the orders defined by $P_T$, $T\in X_k$.
\end{proposition}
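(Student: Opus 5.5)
The plan is to show that every upper bound of $0$ and $i$ can be strictly decreased while remaining an upper bound. Then no upper bound is least. Fix $T\in X_k$ and write $t=t_T$, $\OO=\OO_T$ and $P=P_T$.

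Let $u$ be any upper bound of $0$ and $i$, so that $u\in P$ and $u-i\in P$.

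First, $u\neq0$. Otherwise $-i\in P$, contradicting $P\cap\Qbar=\A_{\ge0}$ from Corollary~\ref{cor:complex}.

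Second, $u\neq i$, since $i\notin P$ by the same corollary.

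Hence both $u$ and $u-i$ are nonzero positive. By the definition of $P$ and Proposition~\ref{prop:leading}, we can write
\[
 u=t^{-n}(a+tr),\qquad u-i=t^{-m}(b+ts),
\]
with $n,m\in\Z$, $a,b\in\A_{>0}$ and $r,s\in\OO$.

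Next choose an integer $M\ge\max\{1-n,\,1-m,\,0\}$ and put $q=t^M$. This $q$ is a nonzero element of $P$, since $t^M=t^{-(-M)}(1+t\cdot0)$. We then compute
\[
 u-q=t^{-n}\bigl(a+t\,[\,r-t^{M+n-1}\,]\bigr),\qquad
 (u-q)-i=t^{-m}\bigl(b+t\,[\,s-t^{M+m-1}\,]\bigr).
\]
The exponents $M+n-1$ and $M+m-1$ are nonnegative, so the brackets lie in $\OO$. Both elements therefore lie in $t^{-n}(\A_{>0}+t\OO)$ and $t^{-m}(\A_{>0}+t\OO)$ respectively, and so they belong to $\nonzero P$. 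Thus $u-q$ is again an upper bound of $0$ and $i$.

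Finally, $u-(u-q)=q\in\nonzero P$, so $u-q\prec u$. This means $u$ is not least. Since $u$ was an arbitrary upper bound, $0$ and $i$ have no least upper bound.

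The only delicate point is excluding $u=0$ and $u=i$, which is needed so that both $u$ and $u-i$ have a level-and-leading-coefficient form to perturb. This is handled by $P\cap\Qbar=\A_{\ge0}$. The remainder is the same absorption computation as \eqref{eq:adddifferent}. A small element of high $t$-order is swallowed into the $t\OO$ tail without changing the leading coefficient.

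Symmetrically, the same argument with $u+q$ shows that $0$ and $i$ have no greatest lower bound either. This is not needed for the statement.
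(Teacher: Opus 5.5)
Your proof is correct, but it takes a different route from the paper's.

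\textbf{The paper's route.} It first shows that every common upper bound $u$ lies outside $\OO$. Otherwise $u$ and $u-i$ would have residues $a,b\in\A_{\ge0}$ modulo $t\OO$ by \eqref{eq:units}, which forces $i-(a-b)\in t\OO\cap\Qbar=\{0\}$. Hence $u$ has level $n\ge1$. The paper then takes $u/2$ as the smaller upper bound, absorbing $i$ into its tail as $t^{n-1}i$.

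\textbf{Your route.} You subtract a single element $t^M$ of very negative level and absorb it into the tails of $u$ and $u-i$ at once. This is the mechanism of \eqref{eq:adddifferent}.

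\textbf{What each buys.} Your argument avoids the residue computation. It uses nothing about $i$ beyond $u\ne0$ and $u\ne i$, which you justify explicitly from $P\cap\Qbar=\A_{\ge0}$; the paper takes these for granted. So your argument shows, for every cone of Theorem~\ref{thm:local}, that an upper bound lying strictly above both elements is never least. Consequently no incomparable pair has a supremum, and dually (using $w+t^M$) no infimum. The paper's route gives, in exchange, the extra structural fact that every common upper bound of $0$ and $i$ has level at least $1$.
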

\begin{proof}
Fix $T$ and write $t=t_T$, $\OO=\OO_T$, and $P=P_T$.
Let $u$ be a common upper bound of $0$ and $i$. Then $u$ and $u-i$ are nonzero positive elements. We first show that $u\notin\OO$. If $u\in\OO$, then $u-i\in\OO$ as well. By \eqref{eq:units}, there are $a,b\in\A_{\ge0}$ such that
\[
 u\equiv a\pmod{t\OO},\qquad u-i\equiv b\pmod{t\OO}.
\]
Thus $i-(a-b)\in t\OO\cap\Qbar=\{0\}$, a contradiction.

Write $u=t^{-n}(c+tr)$ with $c\in\A_{>0}$ and $r\in\OO$. Since $u\notin\OO$, we have $n\ge1$. Therefore
\[
 \frac{u}{2}-i
 =t^{-n}\left(\frac c2+t\left[\frac r2-t^{n-1}i\right]\right)
 \in\nonzero P.
\]
Also $u/2\in\nonzero P$. Hence $u/2$ is a common upper bound of $0,i$, and $u/2\prec u$. No common upper bound can be least.
\end{proof}

\section{Which generators induce the same cone?}\label{sec:equivalence}
To determine when two constructions give the same cone, we first recover the ring $R$ from the cone. We show that the infinitesimals relative to $\A$ form the ideal $tR$. The ring $R$ then consists exactly of those $x\in\C$ for which $x(tR)\subseteq tR$.

Call $(R,t)$ \emph{admissible} if $R$ is a subring of $\C$ containing $\A$ and \eqref{eq:local} holds with $E=\C$. Write
\[
 P(R,t)=\{0\}\cup\bigcup_{n\in\Z}t^{-n}(\A_{>0}+tR)
\]
and define
\begin{equation}\label{eq:intrinsic}
 I(P)=\{z\in\C:\varepsilon\pm z\in P\text{ for every }\varepsilon\in\A_{>0}\},
 \qquad (I:I)=\{x\in\C:xI\subseteq I\}.
\end{equation}

\begin{theorem}[Ring reconstruction and equality of cones]\label{thm:recovery}
For every admissible pair $(R,t)$,
\begin{equation}\label{eq:recovery}
 I(P(R,t))=tR,\qquad (I(P(R,t)):I(P(R,t)))=R.
\end{equation}
For admissible $(R,t)$ and $(R',s)$,
\begin{equation}\label{eq:equality}
 P(R,t)=P(R',s)
 \quad\Longleftrightarrow\quad R=R'\text{ and }t/s\in\A_{>0}+tR.
\end{equation}
Writing $T=t^{-1}$ and $U=s^{-1}$, the condition $t/s\in\A_{>0}+tR$ becomes $U=aT+r$ with $a\in\A_{>0}$ and $r\in R$.
\end{theorem}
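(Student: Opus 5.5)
We verify the two identities in \eqref{eq:recovery} first, then deduce \eqref{eq:equality} from them together with \eqref{eq:units} and \eqref{eq:inverse}. Throughout, Lemma~\ref{lem:unit} gives $tR\cap\A=\{0\}$, and Proposition~\ref{prop:leading} (with $F=\A$) supplies levels and leading coefficients.

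\emph{Step 1: $I(P)=tR$, where $P=P(R,t)$.} For $z\in tR$ and $\varepsilon\in\A_{>0}$ we have $\varepsilon\pm z\in\A_{>0}+tR\subseteq P$, so $tR\subseteq I(P)$. For the reverse inclusion, fix $z\in I(P)$ and $\varepsilon\in\A_{>0}$, and put $p=\varepsilon+z$ and $q=\varepsilon-z$. Then $p+q=2\varepsilon$ has level $0$ and leading coefficient $2\varepsilon$. We split into cases.
\begin{itemize}
\item If $p$ and $q$ are both nonzero, then \eqref{eq:leadingsum} forces $\max\{\ell(p),\ell(q)\}=0$. If both levels are $0$, their leading coefficients add to $2\varepsilon$, so $\lc(p)\in(0,2\varepsilon)$. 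If only one level is $0$, that summand has leading coefficient $2\varepsilon$, and the other lies in $tR$.
\item If $p=0$ or $q=0$, then $z=\mp\varepsilon$.
\end{itemize}
In every case $p\in R$ and $z=p-\varepsilon\equiv c_\varepsilon\pmod{tR}$ for some $c_\varepsilon\in\A$ with $|c_\varepsilon|\le\varepsilon$. Because $tR\cap\A=\{0\}$, the residue of $z\in R$ in $\A$ is unique, so $c_\varepsilon=c$ does not depend on $\varepsilon$. Letting $\varepsilon\to0$ within $\A_{>0}$ gives $c=0$, hence $z\in tR$. I expect this step to be the main obstacle: one has to track the degenerate cases (a zero summand, or a summand of negative level) so that the residue bound $|c_\varepsilon|\le\varepsilon$ really holds in each of them.

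\emph{Step 2: $(tR:tR)=R$.} Clearly $R\cdot tR\subseteq tR$. Conversely, if $x\,tR\subseteq tR$, then taking the element $t=t\cdot1$ gives $xt\in tR$. Cancelling $t$ in the field $\C$ gives $x\in R$.

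\emph{Step 3: the equivalence \eqref{eq:equality}.}

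For ``$\Leftarrow$'', assume $R=R'$ and $t/s=u\in G_t=\A_{>0}+tR$. Then $s=tu^{-1}$ with $u^{-1}\in G_t$. Hence
\[
 s^{-n}(\A_{>0}+sR)=t^{-n}u^{n}(\A_{>0}+tR)=t^{-n}(\A_{>0}+tR),
\]
since $sR=tR$ and $G_t$ is a multiplicative group preserving $\A_{>0}+tR$. So the two cones coincide.

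For ``$\Rightarrow$'', equality of cones gives equality of the intrinsic sets $I$. By Step~1 this means $tR=sR'$, and then Step~2 gives $R=(tR:tR)=(sR':sR')=R'$. Now $tR=sR$ makes $t/s$ a unit of $R$. Moreover $t$ and $s$ lie in $P$, so $t/s=t\cdot s^{-1}\in P$ by \eqref{eq:inverse} and multiplicativity. By \eqref{eq:units}, $t/s\in P\cap R^\times=G_t$.

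Finally, with $T=t^{-1}$ and $U=s^{-1}$, we have $t/s=U/T$. Writing $U/T=a+tr$ with $a\in\A_{>0}$ and $r\in R$, and multiplying by $T=t^{-1}$, gives $U=aT+r$.
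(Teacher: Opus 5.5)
Your proof is correct and follows the paper's argument. Levels via \eqref{eq:leadingsum} and uniqueness of residues modulo $tR$ give $I(P)=tR$, cancellation gives the multiplier ring, and \eqref{eq:units} together with the group $G_t$ handles both directions of \eqref{eq:equality}. The only difference is organizational: you redo the level analysis for each $\varepsilon$ and let the bound $|c_\varepsilon|\le\varepsilon$ force $c=0$, whereas the paper uses $1\pm z$ once to get $z\in R$ and then reads $\varepsilon\pm b\ge0$ off the residues.
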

\begin{proof}
The inclusion $tR\subseteq I(P(R,t))$ follows from the definition. Conversely, let $z\in I(P(R,t))$. The elements $1+z$ and $1-z$ are nonnegative and sum to $2$. By \eqref{eq:leadingsum}, every nonzero summand has level at most zero. Thus both lie in $R$, and $z\in R$. Apply \eqref{eq:units} to $1+z$ and subtract $1$ to obtain $z\equiv b\pmod{tR}$ for some $b\in\A$. For every $\varepsilon\in\A_{>0}$, the residues of $\varepsilon\pm z$ give $\varepsilon\pm b\ge0$. If $b\ne0$, take $\varepsilon=|b|/2$ to get a contradiction. Hence $z\in tR$. Finally, cancellation gives $(tR:tR)=(R:R)=R$.

Equality of cones now yields $R=R'$ and $tR=sR$. Consequently $t/s$ is a unit of $R$, and it is positive because $t$ and $s^{-1}$ are positive. Formula \eqref{eq:units} gives $t/s\in G_t=\A_{>0}+tR$. Conversely, if $u=t/s\in G_t$ and $R=R'$, then $sR=tR$ and the group property of $G_t$ gives
\[
 s^{-n}(\A_{>0}+sR)=t^{-n}u^nG_t=t^{-n}G_t\qquad(n\in\Z).
\]
Taking unions proves equality. Multiplying $t/s=a+tr$ by $t^{-1}$ gives $U=aT+r$.
\end{proof}

For fixed $R$, the generators of the same cone form $\A_{>0}T+R$. Indeed, if $U=aT+r$, then $U^{-1}=t(a+tr)^{-1}$, so $U^{-1}R=tR$ and $R[U]=R[T]=\C$. For the family \eqref{eq:rebuilt}, the ring itself depends on the generator. We must therefore compare $\OO_T$ and $\OO_U$ as well.

\begin{proof}[Proof of Theorem~\ref{thm:affine}]
Suppose $P_T=P_U$. Theorem~\ref{thm:recovery} gives $\OO_T=\OO_U$ and $U=aT+b$, where $a\in\A_{>0}$ and $b\in\OO_T\cap\OO_U$. Since $T,U,a$ are real, so is $b$. It remains to prove that $b$ is algebraic over $k$.

Suppose otherwise. Then $b$ is transcendental over $H=k(a)$. We choose a valuation at which $b$ has a pole, and use it to test membership in both integral closures. More precisely, let $V$ be a valuation ring of $\C$ dominating $H[b^{-1}]_{(b^{-1})}$. Such a ring exists, is integrally closed, and satisfies
\[
 x\notin V\ \Longrightarrow\ x^{-1}\in\mathfrak m_V
\]
by \cite[Lemmas 10.50.2--10.50.4]{StacksV}. We have $H\subseteq V$ and $b^{-1}\in\mathfrak m_V$, hence $b\notin V$.

If $T\notin V$, then $T^{-1}\in\mathfrak m_V$. For $g\in k[X]$ with $g(0)\ne0$,
\[
 g(T^{-1})\equiv g(0)\not\equiv0\pmod{\mathfrak m_V},
\]
so $g(T^{-1})$ is a unit of $V$. Consequently $k[T^{-1}]_{(T^{-1})}\subseteq V$, and integral closedness gives $\OO_T\subseteq V$. This contradicts $b\notin V$. Thus $T\in V$. The same argument gives $U\in V$, but then $b=U-aT\in V$, again a contradiction. Therefore $b\in H_k$.

Conversely, let $U=aT+b$ as in \eqref{eq:affine}, and put $H=k(a,b)$, $t=T^{-1}$, $s=U^{-1}$. We first show that extending the constants from $k$ to $H$ leaves this integral closure unchanged:
\begin{equation}\label{eq:constants}
 \IntCl(k[t]_{(t)})=\IntCl(H[t]_{(t)}).
\end{equation}
Indeed, $H\subseteq\OO_T$ since $H/k$ is algebraic. If $g(0)\ne0$ for $g\in H[X]$, write $g(t)=g(0)(1+tv)$ with $v\in H[t]$. Lemma~\ref{lem:unit} gives $g(t)^{-1}\in\OO_T$. Thus
$k[t]_{(t)}\subseteq H[t]_{(t)}\subseteq\OO_T$;
the middle ring is integral over the first, and transitivity of integrality proves \eqref{eq:constants} \cite[Lemma 10.36.6]{StacksI}. The same reasoning applies to $s$.

The substitutions
\[
 s=\frac{t}{a+bt},\qquad t=\frac{as}{1-bs}
\]
give $s\in tH[t]_{(t)}$ and $t\in sH[s]_{(s)}$. A polynomial in $s$ with nonzero constant term is therefore a unit of $H[t]_{(t)}$, and the converse holds with $s,t$ interchanged. Hence $H[t]_{(t)}=H[s]_{(s)}$. Equation \eqref{eq:constants} gives $\OO_T=\OO_U$, while $t/s=a+bt\in\A_{>0}+t\OO_T$. Theorem~\ref{thm:recovery} yields equality of the cones.

If $aT+b=a'T+b'$ and $a\ne a'$, then $T=(b'-b)/(a-a')\in H_k$, a contradiction. Thus $a=a'$ and $b=b'$.
\end{proof}

Since $H_k$ is a field containing $\A$, the maps $T\mapsto aT+b$ in \eqref{eq:affine} form a group acting on $X_k$. Their orbits
\begin{equation}\label{eq:classes}
 [T]_k=\A_{>0}T+H_k
\end{equation}
are in bijection with $\mathcal C_k=\{P_T:T\in X_k\}$ through $[T]_k\mapsto P_T$. For example, $\pi$, $2\pi+1$ and $\sqrt2\,\pi+\sqrt3$ give the same cone. The generator $\pi^2$ gives a different cone, since an affine relation would make $\pi$ algebraic over $k$.

\section{Concluding remarks}\label{sec:conclusion}
None of the directed partial orders constructed here is a lattice order, by Proposition~\ref{prop:nonlattice}. We conjecture that $\C$ admits no compatible lattice order at all. This conjecture remains unproved; the proposition establishes nonexistence only within the family considered here.

Theorem~\ref{thm:affine} characterizes when two real generators define the same cone for a fixed coefficient field $k$. More generally, one may ask when two pairs $(k,T)$ and $(k',U)$, with possibly different coefficient fields, define the same cone. Theorem~\ref{thm:recovery} gives a criterion involving the corresponding integral closures. A natural further problem is to express this criterion directly in terms of the coefficient fields and generators.

\end{document}